\newtheorem{X}{X}[section]
\theoremstyle{plain}
\newtheorem{corollary}[X]{Corollary}
\newtheorem{proposition}[X]{Proposition}
\newtheorem{theorem}[X]{Theorem}
\theoremstyle{definition}
\newtheorem{plain}[X]{}
\newtheorem{example}[X]{Example}
\newtheorem{remark}[X]{Remark}
\DeclareMathOperator{\inv}{inv}
\DeclareMathOperator{\SL}{SL}
\DeclareMathOperator{\Lie}{Lie}
\DeclareMathOperator{\ad}{ad}
\DeclareMathOperator{\der}{der}
\DeclareMathOperator{\End}{End}
\DeclareMathOperator{\Aut}{Aut}
\DeclareMathOperator{\GL}{GL}
\DeclareMathOperator{\Hom}{Hom}
\DeclareMathOperator{\Gal}{Gal}
\begin{document}

\title{Nonhomeomorphic conjugates of connected Shimura varieties}
\author{James S. Milne and Junecue Suh}
\date{April 13, 2008}
\maketitle

\begin{abstract}
We show that conjugation by an automorphism of $\mathbb{C}$ may change the
topological fundamental group of a locally symmetric variety over $\mathbb{C}%
$. As a consequence, we obtain a large class of algebraic varieties defined
over number fields with the property that different embeddings of the number
field into $\mathbb{C}$ give complex varieties with nonisomorphic fundamental groups.

\end{abstract}

Let $V$ be an algebraic variety over $\mathbb{C}$, and let $\tau$ be an
automorphism of $\mathbb{C}$ (as an abstract field). On applying $\tau$ to the
coefficients of the polynomials defining $V$, we obtain a conjugate algebraic
variety $\tau V$ over $\mathbb{C}$. The cohomology groups $H^{i}(V^{\text{an}%
},\mathbb{Q})$ and $H^{i}(\left(  \tau V\right)  ^{\text{an}},\mathbb{Q})$
have the same dimension, and hence are isomorphic, because, when tensored with
$\mathbb{Q}_{\ell}$, they become isomorphic to the \'{e}tale cohomology groups
which are unchanged by conjugation. Similarly, the profinite completions of
the fundamental groups of $V^{\text{an}}$ and $\left(  \tau V\right)
^{\text{an}}$ are isomorphic because they are isomorphic to the \'{e}tale
fundamental groups. However, Serre \cite{se} gave an example in which the
fundamental groups themselves are \emph{not} isomorphic (see \cite{ev} for a
discussion of this and other examples). It seems to have been known (or, at
least, expected) for some time that the theory of Shimura varieties provides
many more examples, but, as far we know, the details have not been written
down anywhere. The purpose of this note is to provide these details.

In the first section, we explain how the semisimple algebraic group attached
to a connected Shimura variety changes when one conjugates the variety, and in
the second section, we apply Margulis's super-rigidity theorem to deduce that
the isomorphism class of the fundamental group changes for large classes of varieties.

We note that all the varieties we consider have canonical models defined over
number fields. Thus, they provide examples of algebraic varieties defined over
number fields with the property that different embeddings of the number field
into $\mathbb{C}$ give complex varieties with nonisomorphic fundamental
groups.\medskip

We assume that the reader is familiar with the basic theory of Shimura
varieties, for example, with the first nine sections of \cite{isv}, and with
the standard cohomology theories for algebraic varieties, as explained, for
example, in the first section of \cite{de}. The identity component of a Lie
group $H$ is denoted $H^{+}$. We use $\approx$ to denote an isomorphism, and
$\simeq$ to denote a canonical isomorphism. We let $U_{1}=\{z\in\mathbb{C}%
\mid|z|=1\}$ regarded as either a real algebraic group or a real Lie group,
and we let $\mathbb{P}$ denote the set of prime numbers $\{2,3,5,7,\ldots\}$.

\section{Conjugation of connected Shimura varieties}

\begin{plain}
\label{p0}Throughout this section, $F$ is a totally real field of finite
degree over $\mathbb{Q}$ which should \emph{not} be thought of as a subfield
of $\mathbb{C}$. For an algebraic group $H$ over $F$, we let $H_{\ast}$ denote
the algebraic group over $\mathbb{Q}$ obtained by restriction of scalars.
Thus, for any $\mathbb{Q}$-algebra $R$, $H_{\ast}(R)=H(F\otimes_{\mathbb{Q}%
}R)$ and $H_{\ast}(\mathbb{R})=\prod_{v\colon F\rightarrow\mathbb{R}}%
H_{v}(\mathbb{R})$ where $H_{v}$ is the algebraic group over $\mathbb{R}$
obtained by extension of scalars $v\colon F\rightarrow\mathbb{R}$. Then $H(F)$
is a subgroup of $H_{\ast}(\mathbb{R})$, and we let $H(F)^{+}=H(F)\cap
H_{\ast}(\mathbb{R})^{+}$.
\end{plain}

\begin{plain}
\label{p0a}By a hermitian symmetric domain, we mean any complex manifold
isomorphic to a bounded symmetric domain. Let $X$ be a hermitian symmetric
domain, and let $\mathrm{Hol}(X)^{+}$ be the identity component of the group
of holomorphic automorphisms of $X$. Let $H$ be a semisimple algebraic group
over $F$, and let $H_{\ast}(\mathbb{R})^{+}\rightarrow\mathrm{Hol}(X)^{+}$ be
a surjective homomorphism with compact kernel. For any torsion-free arithmetic
subgroup $\Gamma$ of $H^{\mathrm{ad}}(F)^{+}$, the quotient manifold
$V=\Gamma\backslash X$ has a unique structure of an algebraic variety
(Baily-Borel, Borel). When the inverse image of $\Gamma$ in $H(F)$ is a
congruence subgroup, we say that the algebraic variety $V$ is of type $(H,X)$.
Note that if $V$ is of type $(H,X)$, then it is also of type $(H_{1},X)$ for
any isogeny $H_{1}\rightarrow H$, in particular, for $H_{1}$ the simply
connected covering group of $H$.
\end{plain}

\begin{theorem}
\label{t1}Let $V$ be a smooth algebraic variety over $\mathbb{C}$, and let
$\tau$ be an automorphism of $\mathbb{C}$. If $V$ is of type $(H,X)$ with $H$
simply connected, then $\tau V$ is of type $(H^{\prime},X^{\prime})$ for some
semisimple algebraic group $H^{\prime}$ over $F$ such that%
\begin{equation}
\left\{
\begin{aligned} H_{v}^{\prime} & \approx H_{\tau\circ v}\text{ for all real primes }v\colon F\rightarrow\mathbb{R}\text{ of }F\text{, and}\\ H_{F_{v}}^{\prime} & \simeq H_{F_{v}}\text{ for all finite primes }v\text{ of }F. \end{aligned}\right.
\label{eq5}%
\end{equation}

\end{theorem}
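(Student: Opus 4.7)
The plan is to invoke the theory of conjugation of (connected) Shimura varieties (Langlands, Milne, Borovoi) and then read off (\ref{eq5}) from the cocycle construction. First I would realize $V$ as a connected component of a full Shimura variety $\Sh(G,\mathfrak{X})$ attached to a Shimura datum $(G,\mathfrak{X})$, where $G$ is a reductive group over $\mathbb{Q}$ chosen so that the simply connected cover of $G^{\der}$ equals $H_{\ast}$ and $X$ is a connected component of $\mathfrak{X}$; the hypothesis that $H$ is simply connected keeps this bookkeeping compatible with the conventions of (\ref{p0a}) and makes the attached group well-defined up to canonical isomorphism.

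Next, I would apply the conjugation theorem: for $\tau\in\Aut(\mathbb{C})$ there exist a Shimura datum $(G',\mathfrak{X}')$ and an isomorphism $\tau\Sh(G,\mathfrak{X})\approx\Sh(G',\mathfrak{X}')$, with $G'$ obtained from $G$ by a twist by a cocycle determined by $\tau$ together with a choice of special point. Letting $H'$ be the semisimple $F$-group with $H'_{\ast}$ the simply connected cover of $(G')^{\der}$, and letting $X'$ be a connected component of $\mathfrak{X}'$, this shows that $\tau V$ is of type $(H',X')$, so the entire theorem reduces to the verification of (\ref{eq5}) for this specific $H'$.

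The finite-place part is essentially tautological: the cocycle defining $G'$ is trivial on the $\mathbb{A}_f$-component (the finite-adelic structure of the Shimura datum reflects the \'{e}tale fundamental group, which is $\tau$-invariant), giving a canonical isomorphism $G\otimes\mathbb{A}_f\simeq G'\otimes\mathbb{A}_f$; passing to simply connected covers of derived groups and then to $F$-components yields the canonical isomorphism $H_{F_v}\simeq H'_{F_v}$. The substantive content is at real places: since $F$ is totally real, every embedding $F\to\mathbb{C}$ factors through $\mathbb{R}$, so $\tau$ acts on the real primes of $F$ by $v\mapsto\tau\circ v$, and one has to unpack the cocycle at the archimedean place in order to match this permutation with the reshuffling of the simple factors of $H_{\ast}(\mathbb{R})=\prod_{v}H_{v}(\mathbb{R})$ in $H'_{\ast}(\mathbb{R})$.

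The main obstacle is precisely this last archimedean analysis: while the conjugation theorem itself may be treated as a black box, translating its cocycle-theoretic output into the clean statement $H'_{v}\approx H_{\tau\circ v}$ requires tracking how $\tau$ acts on the special (CM) points and their attached reflex data, and then identifying this action with the permutation $v\mapsto\tau\circ v$ of real embeddings of $F$. Once this archimedean compatibility is established, the remainder of the argument is bookkeeping in the formalism of (\ref{p0a}).
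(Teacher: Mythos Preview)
Your plan is the same as the paper's: both rest on the conjugation theorem of \cite{mi} (the paper phrases it via the Serre group $S_F$ and the torsor $P(\tau)$ rather than by first enlarging to a reductive $(G,\mathfrak X)$, but this is cosmetic). Your description of the finite-place step is morally right, though the paper makes it precise not by appealing to $\tau$-invariance of the \'etale fundamental group but by writing down an explicit point $a_v\in P(\tau)(F_v)$ from the Betti--\'etale comparison isomorphism; this is what trivializes the torsor over $F_v$ and gives the canonical $H_{F_v}\simeq H'_{F_v}$.

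Where your outline stops---the archimedean step---is exactly where the paper supplies a concrete mechanism you should know about. The paper does \emph{not} track reflex data directly. Instead it uses de Rham cohomology of a CM abelian variety to produce a $\tau$-linear comparison at each real place and shows (equation (\ref{eq12})) that this carries the Hodge cocharacter $\mu_v$ to $\mu'_{\tau\circ v}$. The remaining point, and the ingredient missing from your sketch, is the structural fact recorded in (\ref{p2})--(\ref{p3}): over an algebraically closed field the isomorphism class of a pair $(H,\mu)$ with $\mu$ minuscule is determined by the Dynkin diagram together with the marked special node, and this data is visibly $\tau$-invariant. That is what converts ``$\mu'_{\tau\circ v}=\tau\mu_v$'' into ``$H'_{\tau\circ v}\approx H_v$''; without it, knowing the cocycle at CM points does not by itself pin down the real form.
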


\begin{example}
\label{x1}Let $B$ be a quaternion algebra over $F$, and let $\inv_{v}%
(B)\in\{0,\frac{1}{2}\}$ be the invariant of $B$ at a prime $v$ of $F$. Recall
that $B$ is determined up to isomorphism as an $F$-algebra by its invariants.
We assume that there exists a real prime of $F$ at which $B$ is indefinite,
and we let $X$ be a product of copies of the complex upper half-plane indexed
by such primes. Let $H=\SL_{1}(B)$. Then $H_{\ast}(\mathbb{R})$ acts in a
natural way on $X$, and for any torsion-free congruence subgroup $\Gamma$ of
$H(F)$, the quotient $V\overset{\text{{\tiny def}}}{=}\Gamma\backslash X$ is a
smooth algebraic variety over $\mathbb{C}$. The theorem shows that, for any
automorphism $\tau$ of $\mathbb{C}$, $\tau V$ arises in a similar fashion from
a quaternion algebra $B^{\prime}$ such that%
\begin{align*}
\inv_{v}(B^{\prime})  &  =\inv_{\tau\circ v}(B)\quad\text{for all infinite
primes }v\text{ of }F\text{, and}\\
\inv_{v}(B^{\prime})  &  =\inv_{v}(B)\quad\text{for all finite primes }v\text{
of }F.
\end{align*}

\end{example}

\begin{remark}
Let $M=\Gamma\backslash X$ be the quotient of a hermitian symmetric domain by
a torsion-free discrete subgroup $\Gamma$ of $\mathrm{Hol}(X)^{+}$. Then $X$ is the
universal covering space of $M$ and $\Gamma$ is the group of covering
transformations of $X$ over $M$, and so, for any $o\in X$ and its image
$\bar{o}$ in $M$, there is an isomorphism $\Gamma\simeq\pi_{1}(M,\bar{o})$
which depends only on the choice of $o$ mapping to $\bar{o}$.
\end{remark}

\begin{remark}
\label{r1}For $V$ as in the theorem, it is possible to describe the
fundamental group of $\left(  \tau V\right)  ^{\text{an}}$ in terms of that of
$V^{\text{an}}$. By assumption, $V=\Gamma\backslash X$ with $\Gamma$ a
torsion-free arithmetic subgroup of $H^{\mathrm{ad}}(F)^{+}\subset\mathrm{Hol}(X)^{+}$
and the inverse image $\tilde{\Gamma}$ of $\Gamma$ in $H(F)$ is a congruence
subgroup, i.e., $\tilde{\Gamma}=H(F)\cap K$ for some compact open subgroup $K$
of $H(\mathbb{A}_{F}^{\infty})$. Similarly, $\tau V=\Gamma^{\prime
}\backslash X^{\prime}$ and the inverse image $\tilde{\Gamma}^{\prime}$ of
$\Gamma^{\prime}$ in $H^{\prime}(F)$ equals $K^{\prime}\cap H^{\prime}(F)$ for
a compact open subgroup $K^{\prime}$ of $H^{\prime}(\mathbb{A}_{F}^{\infty
})$.

It is known that the isomorphisms $H_{F_{v}}\simeq H_{F_{v}}^{\prime}$ in
Theorem \ref{t1} induce an isomorphism $H(\mathbb{A}_{F}^{\infty})\simeq
H^{\prime}(\mathbb{A}_{F}^{\infty})$, and that $K$ maps to $K^{\prime}$
under this isomorphism. If the map $\tilde{\Gamma}\rightarrow\Gamma$ is
surjective, then $\Gamma^{\prime}$ equals the image of $\tilde{\Gamma}%
^{\prime}\overset{\text{{\tiny def}}}{=}H^{\prime}(F)\cap{}K^{\prime}$
 in $H^{\prime\mathrm{ad}}(F)$. We can
always arrange this by choosing $\Gamma$ to be the image of a torsion-free
congruence subgroup of $H(F)$.

In general, the map\footnote{Recall that, for a simply connected semisimple
group $H$, $H(\mathbb{R})$ is connected.} $H(F)\rightarrow H^{\mathrm{ad}%
}(F)^{+}$ is not surjective, and so $\tilde{\Gamma}\rightarrow\Gamma$ need not
be surjective. However, the isomorphisms $H_{F_{v}}\simeq H_{F_{v}}^{\prime}$
define an isomorphism%
\[
\widehat{H^{\mathrm{ad}}(F)^{+}}\rightarrow\widehat{H^{\prime\mathrm{ad}%
}(F)^{+}}%
\]
where the hat means that we are taking the completion for the topologies
defined by the arithmetic subgroups whose inverse images in $H(F)$ or
$H^{\prime}(F)$ are congruence subgroups \cite[8.2]{ms2}. It is known that
$\widehat{\Gamma}$ maps to $\widehat{\Gamma^{\prime}}$ under this isomorphism.
As $\Gamma^{\prime}=\widehat{\Gamma^{\prime}}\cap H^{\prime\mathrm{ad}}%
(F)^{+}$, this determines the fundamental group of $(\tau V)^{\text{an}}$.
\end{remark}

The proof of Theorem \ref{t1} (and Remark \ref{r1}) will occupy the rest of
this section.

\subsection{Preliminary remarks}

\begin{plain}
\label{p1}Let $H$ be a simply connected semisimple algebraic group over $F$,
and let $H_{\ast}(\mathbb{R})^{+}\rightarrow\mathrm{Hol}(X)^{+}$ be a
surjective homomorphism with compact kernel as in (\ref{p0a}). Choose a point
$o\in X$. There exists a unique homomorphism $u\colon U_{1}\rightarrow
H_{\ast}^{\mathrm{ad}}(\mathbb{R})$ such that

\begin{itemize}
\item $u(z)$ fixes $o$ and acts as multiplication by $z$ on the tangent space
at $o$;

\item $u(z)$ projects to $1$ in each compact factor of $H_{\ast}^{\mathrm{ad}%
}(\mathbb{R})$
\end{itemize}

\noindent\noindent\cite[1.9]{isv}. It is possible to recover $X$ and the
homomorphism $H_{\ast}(\mathbb{R})^{+}\rightarrow\mathrm{Hol}(X)^{+}$ from
$(H,u)$ (ibid. 1.21). Moreover, the isomorphism class of the pair
$(H_{\mathbb{R}},u)$ is determined by the isomorphism class of the pair
$(H_{\mathbb{C}},u_{\mathbb{C}})$ (ibid. 1.24). The cocharacter $u_{\mathbb{C}%
}$ of $H_{\mathbb{C}}$ satisfies the following condition (ibid.
1.21):\quote(*) in the action of $\mathbb{G}_{m}$ on $\Lie(G_{\mathbb{C}})$
defined by $\ad\circ u_{\mathbb{C}}$, only the characters $z,1,z^{-1}$ occur.\endquote
\end{plain}

\begin{plain}
\label{p2}Let $k$ be an algebraically closed field of characteristic zero.
Consider a simple adjoint group $H$ over $k$ and a cocharacter $\mu$ of $H$
satisfying (*). Let $T$ be a maximal torus in $H$, let $R$ be the set of
roots of $H$ relative to $T$ (so its elements are characters of $T$), and 
choose a base for $R$. Among the cocharacters of $H$ conjugate to $\mu$, 
there is exactly one $\mu^{\prime}$ that factors through $T$ and is such 
that $\langle\alpha,\mu^{\prime}\rangle\geq0$ for all positive $\alpha\in R$, 
and among the simple roots, there is exactly one $\alpha$ such that 
$\langle\alpha,\mu^{\prime}\rangle\neq0$. The isomorphism class of the pair 
$(H,\mu)$ is determined by the Dynkin diagram of $H$ and the root $\alpha$ 
\cite[1.24 et seq.]{isv}.
\end{plain}

\begin{plain}
\label{p3}Let $(H,\mu)$ be a pair as in (\ref{p2}), and let $\tau\colon
k\rightarrow k^{\prime}$ be a homomorphism of fields. Then $\Lie(\tau
H)\simeq\Lie(H)\otimes_{k,\tau}k^{\prime}$, and this isomorphism defines an
isomorphism from the Dynkin diagram of $\Lie(H)$ to that of $\Lie(\tau H)$
which sends the special root attached to $\mu$ to that attached to $\tau\mu$.
Therefore, if $k=k^{\prime}$, then $(H,\mu)\approx(\tau H,\tau\mu)$.
\end{plain}

\subsection{Conjugation of abelian varieties}

Let $A$ be an abelian variety over an algebraically closed field $k$ of
characteristic zero. The \'{e}tale cohomology groups $H_{\mathrm{et}}%
^{r}(A,\mathbb{Q}_{\ell}(s))$ for $\ell\in\mathbb{P}$ are finite-dimensional
$\mathbb{Q}_{\ell}$-vector spaces and the de Rham cohomology groups
$H_{\mathrm{dR}}^{r}(A)(s)$ are finite-dimensional $k$-vector spaces. For
convenience, we denote these groups by $H_{\ell}^{r}(A)(s)$ and $H_{\infty
}^{r}(A)(s)$ respectively. When $k=\mathbb{C}$, we denote the Betti cohomology
groups $H^{r}(A^{\text{an}},\mathbb{Q}(s))$ by $H_{B}^{r}(A)(s)$.

Recall that there are canonical isomorphisms for $l\in\mathbb{P}\cup
\{\infty\}\cup\{B\}$,%
\begin{equation}
H_{l}^{1}(A^{n})\simeq nH_{l}^{1}(A),\quad H_{l}^{r}(A^{n})\simeq
\bigwedge\nolimits^{r}H_{l}^{1}(A^{n}),\quad H_{l}^{r}(A^{n})(s)\simeq
H_{l}^{r}(A^{n})\otimes_{\mathbb{Q}}\mathbb{Q}(s). \label{eq2}%
\end{equation}
Recall also that, when $k=\mathbb{C}$, there are canonical comparison
isomorphisms%
\begin{equation}
\left\{
\begin{aligned} H_B^{r}(A)(s)\otimes_{\mathbb{Q}}\mathbb{Q}_{\ell} & \overset{\simeq}{\longrightarrow} H_{\ell}^{r}(A)(s),\quad\ell\in\mathbb{P},\\ H_B^{r}(A)(s)\otimes_{\mathbb{Q}}\mathbb{C} & \overset{\simeq}{\longrightarrow}H_{\infty}^{r}(A)(s). \end{aligned}\right.
\label{eq1}%
\end{equation}

For a homomorphism $\tau\colon k\rightarrow k^{\prime}$ of algebraically
closed fields, $\tau A$ denotes the abelian variety over $k^{\prime}$ obtained
by extension of scalars. The morphism $\tau A\rightarrow A$ induces
isomorphisms of vector spaces%
\begin{equation}
\left\{
\begin{aligned} x & \mapsto {}^{\tau}x\colon H_{\ell}^{r}(A)(s)\rightarrow H_{\ell}^{r}(\tau A)(s),\qquad\qquad\ell\in\mathbb{P},\\ x\otimes 1 & \mapsto {}^{\tau}x\colon H_{\infty}^{r}(A)(s)\otimes_{k,\tau}k^{\prime}\rightarrow H_{\infty}^{r}(\tau A)(s). \end{aligned}\right.
\label{eq4}%
\end{equation}

A family
\[
t=(t_{l})_{l\in\mathbb{P}{}\cup\{\infty\}}\in\prod\nolimits_{l\in\mathbb{P}%
}H_{l}^{r}(A)(s)\times H_{\infty}^{r}(A)(s),\quad(r,s)\in\mathbb{N}%
\times\mathbb{Z},
\]
is said to be a Hodge class on $A$ if for some homomorphism $\sigma\colon
k\rightarrow\mathbb{C}$ there exists a class
\[
t_{0}\in H_{B}^{r}(\sigma A)(s)\cap H^{r}(\sigma A,\mathbb{\mathbb{C}%
}(s))^{0,0}%
\]
mapping to $^{\sigma}t_{l}$ under every comparison isomorphism (\ref{eq1}).
The main theorem of \cite{de} says that a Hodge class $t$ on $A$ then has this
property for \textit{every} homomorphism $\sigma\colon k\rightarrow\mathbb{C}$.

Now fix a homomorphism $i\colon F\rightarrow\End^{0}(A)$. Then $H_{l}^{1}(A)$
is a free $F\otimes_{\mathbb{Q}}\mathbb{Q}_{l}$-module (when $l=\infty$,
$\mathbb{Q}_{l}=k$, and when $l=B$, $\mathbb{Q}_{l}=\mathbb{Q}$). We let%
\[
H_{l}^{r}(A)^{\prime}=\bigwedge\nolimits_{F\otimes_{\mathbb{Q}}\mathbb{Q}%
_{l}}^{r}H_{l}^{1}(A),\quad l\in\mathbb{P}\cup\{\infty\}\cup\{B\}\text{.}%
\]
Thus $H_{l}^{r}(A)^{\prime}$ is the largest subspace of $H_{l}^{r}(A)$ with a
natural action of $F$. By a Hodge class on $(A,i)$, we mean a Hodge class
$(t_{l})_{l\in\mathbb{P}\cup\{\infty\}}$ on $A$ such that if $t_{l}$ lies in
the space $H_{l}^{r}(A)(s)$, then it actually lies in the subspace $H_{l}%
^{r}(A^{n})^{\prime}(s)$. We endow $A^{n}$ with the diagonal action of $F$.

Let $t=(t_{l})$ be a Hodge class on $(A,i)$. According to Deligne's theorem
\cite{de}, for any automorphism $\tau$ of $k$, the family $^{\tau}%
t\overset{\textup{{\tiny def}}}{=}({}^{\tau}t_{l})$ is a Hodge class on $\tau
A$. Here $^{\tau}t_{l}$ is the image of $t_{l}$ under the isomorphism
(\ref{eq4}).

We now take $k=\mathbb{C}$.

\subsubsection{Study at a finite prime}

Let $\ell\in\mathbb{P}$. From the isomorphism $F\otimes_{\mathbb{Q}%
}\mathbb{Q}_{\ell}\simeq\prod_{v|\ell}F_{v}$ and the diagonal action of $F$ on
$A^{n}$, we get a decomposition $H_{\ell}^{r}(A^{n})^{\prime}\simeq
\bigoplus\nolimits_{v|\ell}H_{\ell}^{r}(A^{n})_{v}$ in which $H_{\ell}%
^{r}(A^{n})_{v}$ is the subspace on which $F$ acts through $F\rightarrow
F_{v}$. The first comparison isomorphism (\ref{eq1}) for $A^{n}$ induces an
$F\otimes_{\mathbb{Q}}\mathbb{Q}_{\ell}$-linear isomorphism $H_{B}%
^{r}(A^{n})^{\prime}(s)\otimes_{\mathbb{Q}}\mathbb{Q}_{\ell}\simeq H_{\ell
}^{r}(A^{n})^{\prime}(s)$, and hence an $F_{v}$-linear isomorphism
\begin{equation}
H_{B}^{r}(A^{n})^{\prime}(s)\otimes_{F}F_{v}\overset{\simeq}{\longrightarrow
}H_{\ell}^{r}(A^{n})_{v}(s). \label{eq6}%
\end{equation}

The first isomorphism (\ref{eq4}) for $A^{n}$ induces an $F\otimes
_{\mathbb{Q}}\mathbb{Q}_{\ell}$-linear isomorphism $H_{\ell}^{r}%
(A^{n})^{\prime}(s)\rightarrow H_{\ell}^{r}(\tau A^{n})^{\prime}(s)$, and
hence an $F_{v}$-linear isomorphism%
\begin{equation}
H_{\ell}^{r}(A^{n})_{v}(s)\overset{\simeq}{\longrightarrow}H_{\ell}^{r}(\tau
A^{n})_{v}(s). \label{eq7}%
\end{equation}

\subsubsection{Study at an infinite prime}

We identify the infinite primes of $F$ with homomorphisms $v\colon
F\rightarrow\mathbb{R}\subset\mathbb{C}$. Note that $F\otimes_{\mathbb{Q}%
}\mathbb{R}$ and $F\otimes_{\mathbb{Q}}\mathbb{C}$ are products of
copies of $\mathbb{R}$ and $\mathbb{C}$ respectively indexed by the real
primes of $F$. The second comparison isomorphism (\ref{eq1}) for $A^{n}$
induces an $F\otimes_{\mathbb{Q}}\mathbb{C}$-linear isomorphism%
\[
H_{B}^{r}(A^{n})^{\prime}(s)\otimes_{\mathbb{Q}}\mathbb{C}\overset{\simeq
}{\longrightarrow}H_{\infty}^{r}(A^{n})^{\prime}(s),
\]
and hence a $\mathbb{C}$-linear isomorphism%
\begin{equation}
H_{B}^{r}(A^{n})^{\prime}(s)\otimes_{F,v}\mathbb{C}\overset{\simeq
}{\longrightarrow}H_{\infty}^{r}(A^{n})_{v}^{\prime}(s) \label{eq8}%
\end{equation}
for each real prime $v$ of $F$.

The second isomorphism (\ref{eq4}) for $A^{n}$ induces an $F\otimes
_{\mathbb{Q}}\mathbb{C}$-isomorphism
\[
H_{\infty}^{r}(A^{n})^{\prime}(s)\otimes_{\mathbb{C},\tau}\mathbb{C}%
\rightarrow H_{\infty}^{r}(\tau A^{n})^{\prime}(s),
\]
and hence a $\tau$-linear isomorphism%
\begin{equation}
H_{\infty}^{r}(A^{n})_{v}^{\prime}(s)\rightarrow H_{\infty}^{r}(\tau
A^{n})_{\tau\circ v}^{\prime}(s) \label{e2}%
\end{equation}
for each real prime $v$ of $F$.

We now assume that $A$ admits complex multiplication, so that there exists a
CM algebra $E$ containing $F$ and a homomorphism $E\rightarrow\End^{0}(A)$
making $H_{1}(A,\mathbb{Q})$ into a free $E$-module of rank $1$. Let
$T=(\mathbb{G}_{m})_{E/F}$ (restriction of scalars, so $T$ is a torus over
$F$), and let $T_{v}=T\otimes_{F,v}\mathbb{R}$. The action of $E$ on
$H_{1}(A,\mathbb{Q})$ defines an action of $T$ on $H_{1}(A,\mathbb{Q})$
and on its dual $H_{B}^{1}(A)$. Recall that%
\begin{equation}
H_{B}^{1}(A)\otimes_{\mathbb{Q}}\mathbb{C}\simeq H^{1,0}(A)\oplus H^{0,1}(A),\quad
H^{1,0}\overset{\text{{\tiny def}}}{=}H^{0}(A,\Omega_{A}^{1}),\quad
H^{0,1}\overset{\text{{\tiny def}}}{=}H^{1}(A,\mathcal{O}_{A}), \label{eq10}%
\end{equation}
from which we deduce a decomposition%
\[
H_{B}^{1}(A)\otimes_{F,v}\mathbb{C}\simeq H_{v}^{1,0}\oplus H_{v}^{0,1}.
\]
There is a cocharacter $\mu_{A}$ of $T_{\ast}$ such that, for $z\in
\mathbb{C}^{\times}$, $\mu_{A}(z)$ acts on $H^{1,0}$ as $z^{-1}$ and on
$H^{0,1}$ as $1$. This decomposes into cocharacters $\mu_{v}$ of
$T_{v\mathbb{C}}$ such, for $z\in\ \mathbb{C}^{\times}$, $\mu_{v}(z)$ acts on
$H_{v}^{1,0}$ as $z^{-1}$ and on $H_{v}^{0,1}$ as $1$.

The decomposition (\ref{eq10}) is algebraic:%
\[
H_{\infty}^{1}(A)\simeq H^{0}(A,\Omega_{A}^{1})\oplus H^{1}(A,\mathcal{O}%
_{A})
\]
(Zariski cohomology of algebraic sheaves). The action of $E$ on $A$ defines an
action of $E$ on $\tau A$, and hence an action of $T$ on its cohomology
groups. There is a commutative diagram%
\[
\begin{CD}
H_{\infty}^{1}(A) @>{\simeq}>> H^{0}(A,\Omega_{A}^{1})\oplus H^{1}(A,\mathcal{O}_{A})\\
@V{\simeq}V{\tau\text{-linear}}V@V{\simeq}V{\tau\text{-linear}}V\\
H_{\infty}^{1}(\tau A) @>>> H^{0}(\tau A,\Omega_{A}^{1})\oplus H^{1}(\tau
A,\mathcal{O}_{A})
\end{CD}
\]
which is compatible with the actions of $T$, from which it follows that
\begin{equation}
\mu_{\tau A}=\tau\mu_{A} \label{eq11}%
\end{equation}
($\tau\mu_{A}$ makes sense because both $\mathbb{G}_{m}$ and $T_{\ast}$ are
$\mathbb{Q}$-tori). Let $\mu_{v}^{\prime}$ be the cocharacter of $T_{v}$
arising from the action of $T$ on the cohomology of $\tau A$. Then
(\ref{eq11}) shows that%
\begin{equation}
\mu_{\tau\circ v}^{\prime}=\tau\mu_v. \label{eq12}%
\end{equation}

\subsection{Proof of Theorem \ref{t1}}

Let $V$ be an algebraic variety over $\mathbb{C}$ of type $(H,X)$ in the sense
of (\ref{p0a}) with $H$ simply connected. Theorem 1.1 of \cite{mi} shows
that, for any automorphism $\tau$ of $\mathbb{C}$, the conjugate $\tau V$ of
$V$ is of type $(H^{\prime},X^{\prime})$ where the pair $(H^{\prime}%
,X^{\prime})$ has a precise description, which we now explain.

Let $\mathbb{Q}^{\mathrm{al}}$ denote the algebraic closure of $\mathbb{Q}$ in
$\mathbb{C}$, and let $\mathsf{M}$ be the category of motives for absolute
Hodge classes generated by the Artin motives over $\mathbb{Q}$ and the abelian
varieties over $\mathbb{Q}$ that become of CM-type over $\mathbb{Q}%
^{\mathrm{al}}$ \cite[\S 6]{dm}. The functor sending a variety $W$ over
$\mathbb{Q}$ to the Betti cohomology of $W_{\mathbb{C}}$ defines a fibre
functor $\omega_{B}$ on $\mathsf{M}$. There is an exact sequence%
\begin{equation}
1\longrightarrow S^{\circ}\longrightarrow S\overset{\pi}{\longrightarrow
}\Gal(\mathbb{Q}^{\mathrm{al}}/\mathbb{Q})\longrightarrow1 \label{eq3}%
\end{equation}
in which $S$ is the pro-algebraic group attached to the Tannakian category
$\mathsf{M}$ and its fibre functor $\omega_{B}$, $\Gal(\mathbb{Q}%
^{\mathrm{al}}/\mathbb{Q})$ is the constant pro-algebraic group attached to
$\omega_{B}$ and the Tannakian subcategory of Artin motives over $\mathbb{Q}$,
and $S^{\circ}$ is the protorus attached to $\omega_{B}$ and the Tannakian
category of CM motives over $\mathbb{Q}^{\mathrm{al}}$ (equivalently, over
$\mathbb{C}$) (ibid. 6.28). The homomorphisms in the sequence are defined by
the obvious tensor functors. The groups $S^{\circ}$ and $S$ are respectively
the connected Serre group and the Serre group.

Now let $\mathsf{M}_{F}$ be the category of motives in $\mathsf{M}$ with
coefficients in $F$. Thus, an object of $\mathsf{M}_{F}$ is an object $X$ of
$\mathsf{M}$ together with a homomorphism $F\rightarrow\End(X)$ (see
\cite[2.1]{de2} for an alternative interpretation). Betti cohomology defines a
fibre functor $\omega_{B}$ on $\mathsf{M}_{F}$ to $F$-vector spaces, and the
exact sequence%
\[
1\longrightarrow S_{F}^{\circ}\longrightarrow S_{F}\overset{\pi}%
{\longrightarrow}\Gal(\mathbb{Q}^{\mathrm{al}}/\mathbb{Q})_{F}\longrightarrow
1
\]
attached to $\mathsf{M}_{F}$ and $\omega_{B}$ is obtained from (\ref{eq3}) by
extension of scalars $\mathbb{Q}\rightarrow F$ \cite[3.12]{dm}.

With $(H,X)$ as in the first paragraph, there exists a $o\in X$ that is fixed
by $T(\mathbb{R})^{+}$ for some maximal torus $T\subset H^{\mathrm{ad}}$. Let
$u\colon U_{1}\rightarrow T_{\ast\mathbb{R}}\subset H_{\ast\mathbb{R}%
}^{\mathrm{ad}}$ be the homomorphism corresponding to $(H,X)$ as in
(\ref{p1}), and let $\mu$ be the cocharacter $u_{\mathbb{C}}$ of
$T_{\ast\mathbb{C}}$. There is a unique homomorphism $\rho\colon S_{F}^{\circ
}\rightarrow T$ sending the universal cocharacter of $S_{F}^{\circ}$ to $\mu$
\cite[\S 1]{ms1}. For $\tau\in\Gal(\mathbb{Q}^{\mathrm{al}}/\mathbb{Q})$, the
inverse image $P(\tau)$ of $\tau$ in $S_{F}$ is an $S_{F}^{\circ}$-torsor, and
the pair $(H^{\prime},\mu^{\prime})$ is obtained from $(H,\mu)$ by twisting by
$P(\tau)$:%
\begin{equation}
\left(  H^{\prime},\mu^{\prime}\right)  =(H,\mu)\wedge^{S_{F}^{\circ}}P(\tau).
\label{eq9}%
\end{equation}
When $F=\mathbb{Q}$, this is proved in \cite[Theorem 1.1]{mi}.\footnote{In
fact, the results of \cite{mi} are stated in terms of Langlands's Taniyama
group rather than the Serre group. However, effectively they are proved for
the Serre group, and then Deligne's theorem \cite{de4} is used to replace the
Serre group with the Taniyama group.} The statement with $F$ is, in fact,
weaker than the statement with $\mathbb{Q}$, and follows from it. Note that a
variety $V$ of type $(H,X)$ with $H$ a semisimple algebraic group over $F$ is
also of type $(H_{\ast},X)$, where $H_{\ast}$ is now a semisimple algebraic
group over $\mathbb{Q}$, and that the $\rho$ for $(H,X)$ is related to the
$\rho$ for $(H_{\ast},X)$ by the canonical isomorphism%
\[
\Hom(S_{F}^{\circ},T)\simeq\Hom(S^{\circ},T_{\ast}).
\]
Using this, it is easy to deduce the statement for $F$ from that for
$\mathbb{Q}$.

We now prove Theorem \ref{t1} for a variety $V$ of type $(H,X)$. The
homomorphism $\rho\colon S_{F}^{\circ}\rightarrow T\subset H$ factors through
an algebraic quotient of $S_{F}^{\circ}$. This allows us to replace
$\mathsf{M}_{F}$ in the above discussion with the subcategory generated by the
Artin motives and a single (very large) abelian variety $A$ equipped with an
$F$-action $i\colon F\rightarrow\End^{0}(A)$. Then $S_{F}^{\circ}$ is replaced
with the subgroup \textrm{MT}$(A,i)$ of $\GL_{F}(H_{B}^{1}(A))\times
\GL_{1}(F(1))$ of elements fixing all Hodge classes on $(A_{\mathbb{Q}%
^{\mathrm{al}}},i)$ and its powers. Moreover, $P(\tau)$ is the \textrm{MT}%
$(A,i)$-torsor such that, for any $F$-algebra $R$, $P(\tau)(R)$ consists of
the isomorphisms%
\[
a\colon H_{B}^{1}(A_{\mathbb{C}})\otimes_{F}R\rightarrow H_{B}^{1}(\tau
A_{\mathbb{C}})\otimes_{F}R
\]
sending $t$ to $^{\tau}t$ for all Hodge classes $t$ on $A_{\mathbb{Q}%
^{\mathrm{al}}}$ \cite[6.29]{dm}.

Let $v$ be a finite prime of $F$ dividing $\ell$. Define $a_{v}$ to be the
isomorphism making the following diagram commute:%
\[
\begin{CD}
H_{B}^{1}(A)\otimes_{F}F_{v} @>{a_{v}}>>H_{B}^{1}(\tau A)\otimes_{F}F_v\\
@V{\simeq}V{\text{(\ref{eq6})}}V @V{\simeq}V{\text{(\ref{eq6})}}V\\
H_{\ell}^{1}(A)_{v} @>{\simeq}>{\text{(\ref{eq7})}}>H_{\ell}^{1}(\tau A)_{v}%
\end{CD}
\]
Then there are commutative diagrams
\[
\begin{CD}
H_{B}^{r}(A^n)^{\prime}(s)\otimes_{F}F_{v} @>{\text{\textquotedblleft}a_{v}\text{\textquotedblright}}>>H_{B}^{r}(\tau A^n)^{\prime}(s)\otimes_{F}F_v\\
@V{\simeq}V{\text{(\ref{eq6})}}V @V{\simeq}V{\text{(\ref{eq6})}}V\\
H_{\ell}^{r}(A^n)_{v}(s) @>{\simeq}>{\text{(\ref{eq7})}}>H_{\ell}^{r}(\tau A^n)_{v}(s)%
\end{CD}
\]
for all $(r,n,s)\in\mathbb{N}\times\mathbb{N}\times\mathbb{Z}$, where
\textquotedblleft$a_{v}$\textquotedblright\ is the map deduced from $a_{v}$ by
means of the isomorphisms (\ref{eq2}). This shows that $a_{v}\in P(\tau
)(F_{v})$ and so it defines an isomorphism $H_{F_{v}}\simeq H_{F_{v}}^{\prime
}$ (see (\ref{eq9})).

Before considering the real primes, we make another observation. Let
$\mathsf{T}$ be a Tannakian category, let $\omega$ be a fibre functor on
$\mathsf{T}$ with values in the category of vector spaces over some field $k$.
The group of tensor automorphisms $\underline{\Aut}^{\otimes}(\omega)$ of
$\omega$ is an affine group scheme $G$ over $k$. A homomorphism $\tau\colon
k\rightarrow k^{\prime}$ defines a fibre functor
\[
^{\tau}\omega\colon X\rightsquigarrow\omega(X)\otimes_{k,\tau}k^{\prime}%
\]
on $\mathsf{T}$, and the group $G^{\prime}$ of tensor automorphisms of
$^{\tau}\omega$ is canonically isomorphic to $\tau G$ \cite[3.12]{dm}. Now
suppose $k=k^{\prime}$. Then the functor $P=\underline{\Hom}^{\otimes}%
(\omega,{}^{\tau}\omega)$ of tensor homomorphisms from $\omega$ to $^{\tau
}\omega$ is a torsor for $G$, and $^{\tau}G$ is the twist of $G$ by this
torsor:%
\[
^{\tau}G=G\wedge^{G}P.
\]

We now consider a real prime $v\colon F\rightarrow\mathbb{R}\subset
\mathbb{C}$. In this case, we define $a_{v}$ to be the $\tau$-linear
isomorphism making the following diagram commute:%
\[
\begin{CD}
H_{B}^{1}(A)\otimes_{F,v}\mathbb{C}
@>{a_{v}}>>H_{B}^{1}(\tau A)\otimes_{F,\tau\circ v}\mathbb{C}\\
@V{\simeq}V{\text{(\ref{eq8})}}V @V{\simeq}V{\text{(\ref{eq8})}}V\\
H_{\infty}^{1}(A)_{v}
@>{\simeq}>{\text{(\ref{e2})}}>H_{\infty}^{1}(\tau A)_{\tau\circ v}.%
\end{CD}
\]
Again, it gives commutative diagrams
\[
\begin{CD}
H_{B}^{r}(A^n)^{\prime}(s)\otimes_{F,v}\mathbb{C}
@>{\text{\textquotedblleft}a_{v}\text{\textquotedblright}}>>H_{B}^{r}(\tau A^n)^{\prime}(s)\otimes_{F,\tau\circ v}\mathbb{C}\\
@V{\simeq}V{\text{(\ref{eq8})}}V @V{\simeq}V{\text{(\ref{eq8})}}V\\
H_{\infty}^{r}(A^n)_{v}(s)@>{\simeq}>{\text{(\ref{e2})}}>H_{\infty}^{r}(\tau A^n)_{\tau\circ v}(s)%
\end{CD}
\]
Thus $a_{v}$ gives an isomorphism $\left(  \tau H_{v}\right)  _{\mathbb{C}%
}\overset{\simeq}{\longrightarrow}\left(  H_{\tau\circ v}^{\prime}\right)
_{\mathbb{C}}$. Because of (\ref{eq12}), this isomorphism sends $\tau\mu
_{v}$ to $\mu_{\tau\circ v}^{\prime}$. In other words,
\[
(\left(  H_{\tau\circ v}^{\prime}\right)  _{\mathbb{C}},\mu_{\tau\circ v
}^{\prime})\simeq(\left(  \tau H_{v}\right)  _{\mathbb{C}},\tau\mu
)\overset{\text{(\ref{p3})}}{\simeq}(\left(  H_{v}\right)  _{\mathbb{C}}%
,\mu).
\]
We now apply (\ref{p2}) to deduce that $H_{\tau\circ v}^{\prime}\approx H_{v}$.

This completes the proof of Theorem \ref{t1}, but it remains to explain Remark
\ref{r1}. The statement in (\ref{r1}) concerning $K$ and $K^{\prime}$ follows
immediately from Theorem 1.1 of \cite{mi}. For the stronger statement
concerning $\widehat{\Gamma}$ and $\widehat{\Gamma^{\prime}}$, we need to
appeal to Proposition 6.1 of \cite{mi}.

\begin{remark}
\label{r2}The proof we have given of Theorem \ref{t1} makes use of the main
theorem of \cite{mi}, and hence of the main theorem of \cite{ka}. For Shimura
varieties of abelian type, it is possible to give a more direct proof of the
theorem that avoids using these results.
\end{remark}

\section{The examples}

%[[We should be able to construct lots of examples as follows. Let $H$ be a
%simple algebraic group over a totally real field $F$. For each real prime $v$
%of $F$, choose an inner form $H_{v}^{\prime}$ of $H_{v}$ over $\mathbb{R}$
%(being careful to chose $H_{v}^{\prime}$ for which there exists an
%$h\colon\mathbb{S}\rightarrow H_{v}^{\prime\mathrm{ab}}$ satisfying the
%axioms). From the isomorphism $H^{1}(F,H)\rightarrow\prod_{v\colon
%F\rightarrow\mathbb{R}}H^{1}(F_{v},H)$ we see that there exists an
%$H^{\prime}$ over $F$ such that $(H^{\prime})_{v}\approx H_{v}^{\prime}$ for
%all $v$.

%For a simple algebraic group $H$ over $\mathbb{C}$, the pairs $(H_{0},h)$
%with $H_{0\mathbb{C}}=H$ are classified (see ISV 1.24). Of course it's
%easiest to see that $H_{v}$ is not isomorphic to $H_{v^{\prime}}$ if one is
%compact and the other isn't, but perhaps there are examples with neither
%compact. So perhaps it is possible to find examples where the Shimura variety
%is not projective.

%It would be useful to find \textquotedblleft naturally
%occurring\textquotedblright\ example. Also it would be useful to indicate how
%widespread the examples are (all dimensions? general type?).]]

In this section, we find examples of connected Shimura varieties $S$ over
$\mathbb{C}$ and field automorphisms $\tau$ of $\mathbb{C}$ such that
$S^{\mathrm{an}}$ and $(\tau S)^{\mathrm{an}}$ have nonisomorphic fundamental
groups. Such $S$ will be attached to a pair $(H,X)$ as in the first section,
where $H$ is an absolutely simple algebraic group over a totally real number
field $F\neq\mathbb{Q}$. As $H_{\ast}$ will be $\mathbb{Q}$-simple with 
$\dim_{\mathbb{C}} X>1$, (i) when $S$ is compact, it will be of general type 
because the canonical bundle will be ample, while (ii) when $S$ is noncompact, 
it will be an open subvariety (with complement of codimension $\geq2$) of a
normal projective variety (the Baily-Borel compactification) with ample
canonical bundle.

We prepare necessary tools in the first two subsections, spell out the
algorithm for construction in the third, and then deal with some examples.

\subsection{Super-rigidity}

The crucial ingredient in our construction of nonhomeomorphic conjugate
Shimura varieties is the super-rigidity of Margulis:

\begin{theorem}[Margulis]
\label{suprig} Let $F$ be a totally real number field and let $H$ and
$H^{\prime}$ be two absolutely simple, simply connected algebraic groups over
$F$ with the (real) rank of $H_{\ast}\otimes_{\mathbb{Q}}\mathbb{R}$ at least
$2$. Suppose that $\Gamma$ (resp. $\Gamma^{\prime}$) is a lattice of $H_{\ast
}(\mathbb{Q})=H(F)$ (resp. $H_{\ast}^{\prime}(\mathbb{Q})$). If $\Gamma$ and
$\Gamma^{\prime}$ are isomorphic, then there exists a field automorphism
$\sigma$ of $F$ such that $H^{\prime}$ is isomorphic to $\sigma H$.
\end{theorem}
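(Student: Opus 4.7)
The plan is to apply Margulis's super-rigidity theorem to promote the abstract isomorphism $\phi\colon\Gamma\xrightarrow{\sim}\Gamma'$ to a $\mathbb{Q}$-algebraic isomorphism $\Phi\colon H_{\ast}\to H_{\ast}'$, and then to extract the desired field automorphism $\sigma$ of $F$ from the way the two restriction-of-scalars groups decompose after base change.

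First, because $H$ is absolutely simple over $F$, the group $H_{\ast}$ is $\mathbb{Q}$-simple, simply connected, and semisimple. The lattice $\Gamma$ is thus an irreducible lattice in the higher-rank real Lie group $H_{\ast}(\mathbb{R})$, and its image $\phi(\Gamma)=\Gamma'$ sits as a lattice -- hence as a Zariski-dense and non-precompact subgroup -- inside $H_{\ast}'(\mathbb{R})$. Margulis's super-rigidity theorem then yields a continuous extension $\Phi_{\mathbb{R}}\colon H_{\ast}(\mathbb{R})\to H_{\ast}'(\mathbb{R})$ of the composition $\Gamma\xrightarrow{\phi}\Gamma'\hookrightarrow H_{\ast}'(\mathbb{R})$. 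Coupling the archimedean statement with the $p$-adic super-rigidity theorem at every finite place, together with the arithmeticity of $\Gamma,\Gamma'$ (itself part of the Margulis package in rank $\geq 2$), one deduces that $\phi$ extends to a $\mathbb{Q}$-rational homomorphism $\Phi\colon H_{\ast}\to H_{\ast}'$ of algebraic groups that agrees with $\phi$ on a finite-index subgroup of $\Gamma$. Applying the same reasoning to $\phi^{-1}$ and invoking the $\mathbb{Q}$-simplicity of each side forces $\Phi$ to be an isomorphism of $\mathbb{Q}$-algebraic groups.

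Next, I would descend $\Phi$ to the $F$-level. For an absolutely simple $F$-group $H$, the base change of $H_{\ast}=\mathrm{Res}_{F/\mathbb{Q}}H$ to $\overline{\mathbb{Q}}$ decomposes as a product $\prod_{v\colon F\hookrightarrow\overline{\mathbb{Q}}}H_{v}$ of absolutely simple factors indexed by the embeddings of $F$; the analogous decomposition holds for $H_{\ast}'$. The isomorphism $\Phi$ induces, after base change, a bijection between the two indexing sets of embeddings, and the $\mathrm{Gal}(\overline{\mathbb{Q}}/\mathbb{Q})$-equivariance of this bijection forces it to be realized by a field automorphism $\sigma$ of $F$. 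Comparing the matched simple factors then produces an $F$-group isomorphism identifying $H'$ with $\sigma H$, which is the conclusion of the theorem.

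The main obstacle is the descent from $\mathbb{R}$ to $\mathbb{Q}$: the archimedean super-rigidity theorem alone produces only a real-algebraic extension of $\phi$, and forcing $\Phi$ to be defined over $\mathbb{Q}$ requires the full arithmetic package -- combining super-rigidity at all archimedean and non-archimedean places with the arithmeticity of $\Gamma$ and $\Gamma'$ -- all of which rely on the rank-$\geq 2$ hypothesis. A secondary subtlety is bookkeeping around the finite centers: the simply connected hypothesis on $H$ and $H'$ ensures that $\Phi$ is an honest isomorphism of algebraic groups and not merely an isogeny, so no further correction between the lattice level and the algebraic-group level is required once $\Phi$ is in hand.
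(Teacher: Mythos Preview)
Your argument is correct and rests on the same input as the paper's, namely Margulis super-rigidity; the difference is only in packaging. The paper's proof is a one-line citation of \cite[Theorem~C, p.~259]{ma}: that theorem already delivers, directly from the abstract isomorphism $\Gamma\cong\Gamma'$, a field automorphism $\sigma$ of $F$ together with an algebraic \emph{epimorphism} between $\sigma H$ and $H'$, and the paper then observes that simply-connectedness forces this epimorphism to be an isomorphism. You instead first manufacture a $\mathbb{Q}$-isomorphism $\Phi\colon H_{\ast}\to H_{\ast}'$ (via archimedean and non-archimedean super-rigidity plus arithmeticity) and only afterwards extract $\sigma$ by hand, reading it off from the $\Gal(\overline{\mathbb{Q}}/\mathbb{Q})$-equivariant permutation of absolutely simple factors of $H_{\ast}\otimes\overline{\mathbb{Q}}$. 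This second step is exactly the algebraic content that Margulis has already folded into Theorem~C, so your route is longer but has the pedagogical advantage of making visible \emph{where} $\sigma$ comes from. One small point: your appeal to ``$\mathbb{Q}$-simplicity of each side'' to conclude $\Phi$ is an isomorphism tacitly uses that $\dim H_{\ast}=\dim H_{\ast}'$; the cleaner way (which you also hint at) is to run super-rigidity on $\phi^{-1}$ as well, obtain $\Psi$ in the other direction, and note that $\Psi\circ\Phi$ and $\Phi\circ\Psi$ agree with the identity on Zariski-dense subgroups.
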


\begin{proof}
This follows from \cite[Theorem C, p. 259]{ma}, together with the fact that an
epimorphism between $H^{\prime}$ and a conjugate of $H$ is necessarily an
isomorphism, because both are assumed to be simply connected.
\end{proof}

\begin{corollary}
\label{suprigcor} Let $F$, $H$ and $H^{\prime}$ be as in Theorem
\ref{suprig} and suppose there is an isomorphism
\begin{equation}
\rho:H\otimes_{F}\mathbb{A}_{F}^{\infty}\cong H^{\prime}\otimes_{F}%
\mathbb{A}_{F}^{\infty}. \label{adelicisom}%
\end{equation}
Let $K$ be an open compact subgroup of $H\left(  \mathbb{A}_{F}^{\infty
}\right)  $ such that the intersections
\[
\Gamma:=K\cap H(F)\text{ and }\Gamma^{\prime}:=\rho(K)\cap H^{\prime}(F)
\]
are both torsion-free. If $H^{\prime}$ is not isomorphic to $\sigma H$ for any
$\sigma\in\Aut(F)$ as an $F$-group, then $\Gamma$ and $\Gamma^{\prime}$ are
not isomorphic. $\square$
\end{corollary}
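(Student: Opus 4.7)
The plan is to prove the contrapositive: assume that $\Gamma$ and $\Gamma'$ are isomorphic as abstract groups and deduce that $H' \cong \sigma H$ for some $\sigma \in \Aut(F)$. Given this setup, the corollary is an essentially immediate application of Theorem \ref{suprig}, so the real content of the proof is verifying that its hypotheses apply to $\Gamma$ and $\Gamma'$.

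First I would check that $\Gamma$ is a lattice in $H_{\ast}(\mathbb{R})$. By the canonical identifications $H(F) = H_{\ast}(\mathbb{Q})$ and $H(\mathbb{A}_{F}^{\infty}) = H_{\ast}(\mathbb{A}_{\mathbb{Q}}^{\infty})$, the subgroup $\Gamma = K \cap H(F)$ is, by construction, a congruence subgroup of the semisimple $\mathbb{Q}$-group $H_{\ast}$; by Borel--Harish-Chandra, every arithmetic subgroup of a semisimple $\mathbb{Q}$-group is a lattice in its group of real points, so $\Gamma$ is a lattice in $H_{\ast}(\mathbb{R})$. Since $\rho$ is an isomorphism of topological groups, $\rho(K)$ is again compact open in $H'(\mathbb{A}_{F}^{\infty}) = H'_{\ast}(\mathbb{A}_{\mathbb{Q}}^{\infty})$, so the same argument applied to $H'_{\ast}$ shows that $\Gamma' = \rho(K) \cap H'(F)$ is a lattice in $H'_{\ast}(\mathbb{R})$. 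The torsion-freeness of $\Gamma$ and $\Gamma'$ is assumed, so no further verification is needed there.

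Now all the hypotheses of Theorem \ref{suprig} are in place: $F$ is a totally real number field, $H$ and $H'$ are absolutely simple and simply connected over $F$, the real rank of $H_{\ast} \otimes_{\mathbb{Q}} \mathbb{R}$ is at least $2$, and $\Gamma$, $\Gamma'$ are lattices in $H(F)$, $H'(F)$ respectively. If we assume an abstract isomorphism $\Gamma \cong \Gamma'$, then Theorem \ref{suprig} directly produces a $\sigma \in \Aut(F)$ such that $H' \cong \sigma H$ as $F$-groups. Under our hypothesis that no such $\sigma$ exists, this is a contradiction, so $\Gamma$ and $\Gamma'$ must be nonisomorphic.

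There is no serious technical obstacle; the one mildly subtle point to bear in mind is that the adelic isomorphism $\rho$ need not descend to an $F$-rational isomorphism $H \cong H'$ (indeed, the entire purpose of the corollary is to exploit situations where it does not), and $\rho$ enters the argument only to guarantee that $\rho(K)$ is a compact open adelic subgroup, so that $\Gamma'$ is a congruence subgroup of $H'(F)$. Once this bookkeeping is in order, the corollary is a one-line contrapositive of Margulis's theorem.
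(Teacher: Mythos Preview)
Your proposal is correct and follows the same approach as the paper: the paper gives no explicit proof (the statement simply ends with a $\square$), treating the corollary as an immediate contrapositive application of Theorem~\ref{suprig}. Your additional verification that $\Gamma$ and $\Gamma'$ are genuine lattices (via Borel--Harish-Chandra, since $K$ and $\rho(K)$ are compact open) just spells out what the paper leaves implicit.
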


\subsection{Automorphism-free number fields and marking finite places}

In applying Corollary \ref{suprigcor}, it will be convenient if the only field
automorphism of $F$ is the identity. We recall how to make such number fields.

\begin{proposition}
For any integer $d\ge3$, there exist totally real number fields $F$ of degree
$d$ over $\mathbb{Q}$ not admitting any field automorphism other than the identity.
\end{proposition}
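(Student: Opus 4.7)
The plan is to exhibit $F$ as $\mathbb{Q}[\alpha]$ for a root $\alpha$ of a monic totally real polynomial $f \in \mathbb{Z}[x]$ of degree $d$ whose Galois closure $L$ has Galois group $G = \Gal(L/\mathbb{Q}) \cong S_d$. Granted this, triviality of $\Aut(F/\mathbb{Q})$ follows from a standard computation: the subgroup $H \subset G$ fixing $F$ is a point stabilizer in $S_d$ (isomorphic to $S_{d-1}$), and $\Aut(F/\mathbb{Q}) \cong N_G(H)/H$. For $d \ge 3$, any $\sigma \in S_d$ with $\sigma H \sigma^{-1} = H$ sends the point stabilized by $H$ to itself (the stabilizer of $\sigma(1)$ equals that of $1$, forcing $\sigma(1)=1$), hence $\sigma \in H$, and the quotient is trivial.

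To produce such an $f$, I would use a one-parameter perturbation of $f_0(x) := \prod_{i=1}^d (x - iN) \in \mathbb{Z}[x]$, where $N$ is a large positive integer, together with Hilbert's irreducibility theorem. The polynomial $f_0(x) - c$ is irreducible in $\mathbb{Q}[c,x]$ (being linear in $c$ with leading coefficient a unit and coprime constant term $f_0(x)$), and its Galois group over $\mathbb{Q}(c)$ is the full $S_d$. The latter is seen via monodromy: the branched covering $x \mapsto f_0(x)$ of $\mathbb{P}^1$ by itself has, aside from the point at infinity, exactly $d-1$ finite critical values (after the substitution $y = x/N$ these depend only on $d$ and are pairwise distinct), and the monodromy transposition attached to the critical value between consecutive real roots swaps precisely those two roots in the natural ordering. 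The resulting transpositions $(1\,2), (2\,3), \ldots, (d-1\,d)$ are the Coxeter generators of $S_d$, so the geometric (and hence arithmetic) Galois group over $\mathbb{Q}(c)$ is all of $S_d$.

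By Hilbert's irreducibility theorem, the set of $c_0 \in \mathbb{Z}$ for which $f_0(x) - c_0$ has Galois group strictly smaller than $S_d$ over $\mathbb{Q}$ is sparse (of density zero, in fact $O(T^{1/2}\log T)$ in $[-T,T]$). On the other hand, since $f_0$ has $d$ simple real roots with critical values scaling as $N^d$, the polynomial $f_0(x) - c_0$ still has $d$ distinct real roots for all $|c_0| \le C\cdot N^d$, where $C > 0$ depends only on $d$. Taking $N$ large enough that this totally real window exceeds the diameter of the exceptional set at the corresponding scale produces a $c_0$ for which $f(x) := f_0(x) - c_0$ is totally real and has Galois group $S_d$, giving the desired field $F$.

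The main obstacle is to reconcile the real-analytic condition (total reality, which forces $c_0$ to stay in a bounded window) with the arithmetic condition (Hilbert irreducibility, which a priori only ensures the generic Galois group on a norm-unbounded set of specializations). The rescaling $a_i = iN$ with $N$ large resolves this by inflating the totally real window without affecting the generic Galois group; one could equivalently vary the $a_i$ in $\mathbb{Z}^d$ and apply the multi-variable form of Hilbert irreducibility.
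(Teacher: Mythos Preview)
Your reduction to constructing a totally real degree-$d$ polynomial with Galois group $S_d$, together with the computation $\Aut(F)\cong N_{S_d}(S_{d-1})/S_{d-1}=1$, matches the paper. The paper then builds the polynomial by weak approximation at three finite primes and at $\mathbb{R}$, forcing the Galois group to contain a $d$-cycle, a $(d-1)$-cycle, and a transposition. Your route via monodromy and Hilbert irreducibility is a legitimate alternative in principle, but the argument as written has a genuine gap.

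The assertion that the $d-1$ finite critical values of $f_0(x)=\prod_{i=1}^d(x-iN)$ are pairwise distinct is false for every even $d$. Substituting $z=x/N-(d+1)/2$ centers the roots symmetrically about the origin, so $N^{-d}f_0$ becomes a polynomial in $z^2$. Hence $f_0(x)-c$ lies in $\mathbb{Q}(c)[z^2]$, and its Galois group over $\mathbb{Q}(c)$ embeds in the wreath product $C_2\wr S_{d/2}$, of order $2^{d/2}(d/2)!<d!$ for all even $d\ge 4$; for $d=4$ one gets at most $D_4$. The generic Galois group is therefore not $S_d$, so no specialization $c_0$ can produce $S_d$ either, and Hilbert irreducibility is of no help. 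The monodromy picture reflects this: two symmetric critical points share a critical value, and the local monodromy there is a product of two commuting transpositions, not a single Coxeter generator.

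The repair is easy---break the central symmetry, e.g.\ take $a_i=i^2N$, so that $\prod(x-a_i)$ is a genuine Morse function and your transposition-by-transposition monodromy argument goes through. Your closing remark about varying the $a_i$ and using multivariable Hilbert irreducibility also works, provided you add that Hilbert sets in $\mathbb{Q}^{d+1}$ are dense for the real topology, so the open condition ``all roots real'' can be imposed simultaneously. Once patched, your approach is valid, though it invokes heavier tools than the paper's self-contained local construction.
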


\begin{proof}
Let $K$ be a Galois extension of $\mathbb{Q}$ with Galois group $S_{d}$, and
let $F$ be the fixed field of the subgroup $S_{d-1}$ of $S_{d}$ of elements
fixing $1$. The automorphism group of $F$ is the normalizer of $S_{d-1}$ in
$S_{d}$ modulo $S_{d-1}$, which is trivial. Thus, to prove the proposition, it
suffices to find a polynomial $g$ of degree $d$ in $\mathbb{Q}[X]$ whose
Galois group is the full group $S_{d}$ of permutations of its roots and which
splits over $\mathbb{R}$, because then the field $F$ generated by any root of
$g$ will have the required properties. Such a polynomial can be constructed by
a standard argument, which we recall.

Let $p_{1},p_{2},p_{3}$ be distinct primes, and let $f_{1},f_{2},f_{3}%
\in\mathbb{Z}[X]$ be monic polynomials of degree $d$ such that $f_{1}$ is
irreducible modulo $p_{1},$ $f_{2}$ is the product of a linear polynomial and
an irreducible polynomial modulo $p_{2}$, and $f_{3}$ is the product of an
irreducible quadratic polynomial and distinct irreducible polynomials of odd 
degree modulo $p_{3}$. 
Finally, let $f_{\infty}\in\mathbb{R}[X]$ be monic of degree $d$ with $d$
distinct real roots. It follows from the weak approximation theorem applied to
the coefficients of the polynomials that there exists a monic polynomial
$g\in\mathbb{Q}[X]$ such that $g-f_{i}\in p_{i}\mathbb{Z}_{p_{i}}[X]$ for
$i=1,2,3$ and $g$ is sufficiently close to $f_{\infty}$ in the real topology
that it also has $d$ distinct real roots. The Galois group of $g$ is a
transitive subgroup of $S_{d}$ containing a transposition and a $(d-1)$ cycle,
and so equals $S_{d}$.
\end{proof}

When $F$ admits nontrivial automorphisms, the following proposition will be useful.

\begin{proposition}
\label{nontrivaut} Let $F$ be a number field and let $p$ be a rational prime
number which splits completely in $F$. If a field automorphism $\sigma$ of $F$
fixes one $p$-adic place of $F$, then $\sigma= 1$.
\end{proposition}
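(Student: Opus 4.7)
The plan is to exploit the fact that when $p$ splits completely, the completion of $F$ at the fixed place is literally $\mathbb{Q}_p$, which has no nontrivial continuous automorphisms.

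First, I would unpack the hypothesis. Let $\mathfrak{p}$ denote the fixed $p$-adic place, thought of as a prime ideal of $\mathcal{O}_F$, and let $v=v_\mathfrak{p}$ be the corresponding normalized valuation. The condition $\sigma(\mathfrak{p})=\mathfrak{p}$ is equivalent to $v(\sigma(x))=v(x)$ for all $x\in F^\times$, so $\sigma$ is an isometry of $F$ for the $\mathfrak{p}$-adic metric. By uniform continuity it extends uniquely to a continuous field automorphism $\hat\sigma\colon F_\mathfrak{p}\to F_\mathfrak{p}$ of the completion. Since $\sigma$ fixes the prime field $\mathbb{Q}\subset F$, so does $\hat\sigma$.

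Next I would invoke the splitting hypothesis: because $p$ splits completely in $F$, the residue degree and the ramification index at $\mathfrak{p}$ are both $1$, so the canonical map $\mathbb{Q}_p\hookrightarrow F_\mathfrak{p}$ is an isomorphism. Thus $\hat\sigma$ is a continuous field automorphism of $\mathbb{Q}_p$ fixing the dense subfield $\mathbb{Q}$, hence $\hat\sigma=\id$. Restricting back to $F\subset F_\mathfrak{p}$ gives $\sigma=1$, as required.

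I do not expect any genuine obstacle: the argument is a direct application of the rigidity of $\mathbb{Q}_p$ as a topological field. The one point worth double-checking is that fixing a place (and not merely lying over $p$) is precisely what upgrades $\sigma$ from a mere field automorphism to a continuous one on the completion; without this, the density argument would not apply.
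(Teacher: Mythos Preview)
Your argument is correct, and it takes a genuinely different route from the paper's. The paper passes to a Galois closure $K/\mathbb{Q}$ with group $G$ and subgroup $H=\Gal(K/F)$, identifies the $p$-adic places of $K$ with a principal homogeneous $G$-set $X\simeq\Hom(K,\mathbb{Q}_p)$, the $p$-adic places of $F$ with $X/H$, and $\Aut(F)$ with $N_G(H)/H$; the statement then reduces to the trivial group-theoretic fact that an element of $N_G(H)$ stabilizing a single $H$-orbit in $X$ must lie in $H$. Your approach instead stays local: you extend $\sigma$ by continuity to the completion $F_{\mathfrak p}$, identify $F_{\mathfrak p}$ with $\mathbb{Q}_p$ via the splitting hypothesis, and use that $\mathbb{Q}$ is dense. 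This is shorter and avoids the Galois closure entirely; the paper's argument, on the other hand, makes the transitive action of $\Aut(\mathbb{C})$ (or of $G$) on places more visible, which is the viewpoint used elsewhere in the construction. Both proofs use the complete-splitting hypothesis in an essential way, but at different moments: yours to identify $F_{\mathfrak p}$ with $\mathbb{Q}_p$, the paper's to make $G$ act simply transitively on $X$.
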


\begin{proof}
Choose a Galois closure $K/\mathbb{Q}$ of $F/\mathbb{Q}$ with group $G$.
Denote by $X$ the set of $p$-adic places of $K$; as $p$ also splits completely
in $K$, $X$ can be identified with the set
\[
\Hom\left(  K,\mathbb{Q}_{p}\right)
\]
of ring homomorphisms of $K$ into $\mathbb{Q}_{p}$, and $G$ acts simply
transitively on $X$ by composition.

Let $H$ be the Galois group of $K/F$. Then the set of $p$-adic places of $F$
is identified with the set $X/H$ of $H$-orbits in $X$, and $\Aut(F)$ with
$N_{G}(H)/H$, where $N_{G}(H)$ denotes the normalizer of $H$ in $G$. In this
identification, the action of $\Aut(F)$ on the $p$-adic places of $F$
corresponds to the induced action of $N_{G}(H)/H$ on $X/H$. Thus the problem
reduces to the following trivial statement: If $H$ is a subgroup of a group
$G$ and if $X$ is a principal homogeneous space for $G$, then an element of
$N_{G}(H)$ leaving stable one $H$-orbit in $X$ necessarily belongs to $H$.
\end{proof}

\subsection{Construction}

Let $F$ be a totally real field. We explain how to construct 

\begin{itemize}
\item pairs $(H,X)$ as in (\ref{p0a}) with $H$ satisfying the hypotheses of
Theorem \ref{suprig},  and 

\item automorphisms $\tau$ of $\mathbb{C}$ such that any group $H^{\prime}$
satisfying (\ref{eq1}) of Theorem \ref{t1} will not be isomorphic to $\sigma
H$ for any automorphism $\sigma$ of $F$. 
\end{itemize}

\noindent Corollary \ref{suprigcor} then implies that, for any sufficiently 
small congruence subgroup $\Gamma$ of $H(F)$, $S\overset{\textup{{\tiny def}}}%
{=}\Gamma\backslash X$ is an algebraic variety over $\mathbb{C}$ whose
fundamental group is not isomorphic to that of $\tau S$.

Let $T$ be one of $A_{n}$ ($n\geq1$), $B_{n}$ ($n\geq2$), $C_{n}$ ($n\geq3$),
$D_{n}$ ($n\geq4$), $E_{6}$ or $E_{7}$.

For each real prime $v$ of $F$, choose a simply connected simple algebraic
group $H(v)$ over $\mathbb{R}$ of type $T$ that is either compact or is 
associated with a hermitian symmetric domain $X(v)$ (i.e. 
$H(v)^{\mathrm{ad}}(\mathbb{R})^{+}=\mathrm{Hol}(X(v))^{+}$). 
Recall that, up to isomorphism, there is exactly one compact possibility for 
$H(v)$ and at least one hermitian possibility (and more than one unless 
$T=A_{1}$, $A_{2}$, $B_{n}$, $C_{n}$, $D_{4}$, $E_{6}$ or $E_{7}$); 
see \cite[p. 518]{he} or \cite[1.3.9]{de1}. We choose the $H(v)$ so that

\begin{enumerate}
\item[(i)] for some $v\neq v^{\prime}$, $H(v)\not \approx H(v^{\prime})$, and

\item[(ii)] the sum of the ranks $\sum_{v|\infty}\mathrm{rk}_{\mathbb{R}}H(v)$
is at least $2$.
\end{enumerate}

\noindent Let $H(\infty)$ (resp. $X$) be the product of $H(v)$'s (resp.
$X(v)$'s) as $v$ runs over all the real primes of $F$ (resp. over the real
primes of $F$ at which $H(v)$ is noncompact). There is a surjective
homomorphism $H(\infty)(\mathbb{R})^{+}\rightarrow\mathrm{Hol}(X)^{+}$ with
compact kernel.

When $F$ admits nontrivial automorphisms, we also need to mark finite places
in some way (cf. Remark \ref{galoisevasion} below). In this case, choose a
rational finite prime $p$ that splits completely in $F$ (such primes abound by
the Chebotarev density theorem) and a prime $v_{0}$ of $F$ lying above $p$.
For each prime $v$ of $F$ lying over $p$, choose an absolutely simple, simply
connected $\mathbb{Q}_{p}$-group $H(v)$ such that $H(v)\not \approx H(v_{0})$
if $v\neq v_{0}$.

When $F$ admits no nontrivial automorphism, we take $S$ to be the set of real
primes of $F$, and otherwise we take it to be the set of real or $p$-adic
primes of $F$. According to a theorem of Borel and Harder \cite[Theorem B]%
{bh}, there exists an absolutely simple, simply connected group $H$ over $F$
such that for each $v\in S$, the localization $H_{v}$ of $H$ at $v$ is
isomorphic to $H(v)$. Then $H$ satisfies the hypotheses of Theorem
\ref{suprig} and there is a surjective homomorphism $H(\mathbb{R}%
)^{+}\rightarrow\mathrm{Hol}(X)^{+}$ with compact kernel.

Let $\tau$ be an automorphism of $\mathbb{C}$ such that, for at least one real
place $v_{1}$, $H(v_{1})$ and $H(\tau\circ v_{1})$ are not
isomorphic.\footnote{The choice of the $H(v)$ defines a partition of set of
real primes of $F$, in which two primes lie in the same set if the groups are
isomorphic. The hypothesis on $\tau$ is that it not preserve this partition.}
Given $(H,X)$, such a $\tau$ always exists because of the condition (i) and
the fact that $\Aut(\mathbb{C})$ acts transitively on the set of real primes
of $F$. Conversely, given a $\tau\in\Aut(\mathbb{C})$ acting non-trivially on
the real places of $F$ and a type $T$, we can always choose a pair $(H,X)$ of
type $T$ satisfying this hypothesis.

It remains to prove, with this choice for $(H,X)$, a group $H^{\prime}$
satisfying the isomorphisms (\ref{eq5}) of Theorem \ref{t1} can not be
isomorphic to $\sigma H$ for any automorphism $\sigma$ of $F$. If $\Aut(F)=1$,
then $\sigma H$ is not isomorphic to $H^{\prime}$ because, $\sigma$ being $1$,
that would imply that $H(v_{1})\approx H^{\prime}(v_{1})\approx H(\tau\circ
v_{1})$, contrary to hypothesis. When $\Aut(F)\neq1$, the condition at the
$p$-adic primes force $\sigma=1$ in view of Proposition \ref{nontrivaut}, and
the same argument applies.

The dimension of the resulting Shimura varieties is equal to $\dim
_{\mathbb{C}} X = \sum_{v} \dim_{\mathbb{C}} X(v)$, where $v$ runs over the
infinite places of $F$ with $H(v)$ noncompact; the table on \cite[p. 518]{he}
gives the real dimension of $X(v)$.

As noted above, there is only one noncompact real form associated with a
hermitian symmetric domain for the types $A_{1}$, $A_{2}$, $B_{n}$, $C_{n}$,
$D_{4}$, $E_{6}$ and $E_{7}$, and $H(v)$ is compact for at least one real
place $v$. This means that the resulting Shimura varieties are necessarily
projective (\cite[3.3(b)]{isv}). For the types $A_{n}(n\geq3)$ and
$D_{n}(n\geq5)$, however, we can construct noncompact examples. Furthermore,
for $A_{n}$, we can find these examples (compact or otherwise) among PEL
moduli spaces of abelian varieties. These will be explained in the subsections
below, after we deal with the quaternionic examples a bit more concretely.

\subsection{Quaternionic Shimura varieties}

We start with quaternionic Shimura varieties as in Example \ref{x1}; so we
denote by $B$ a quaternion algebra over a totally real number field $F$ of
degree $d\ge3$ over $\mathbb{Q}$ and by $\Sigma(B)$ the set of places $v$ of
$F$ at which $B$ is ramified (i.e., $\inv_{v}(B) = 1/2$). The last set
decomposes into $\Sigma(B) = \Sigma(B)_{f} \amalg\Sigma(B)_{\infty}$ of finite
and infinite places of ramification. We let $H:= \SL_{1} (B)$; the real rank
of $H_{\ast}$ is equal to $d - |\Sigma(B)_{\infty}|$.

\subsubsection{Automorphism-free $F$}

Suppose $F$ admits only the identity as automorphism. Choose $B/F$ and
$\tau\in\Aut(\mathbb{C})$ such that (i) $\tau\Sigma(B)_{\infty} \neq
\Sigma(B)_{\infty}$ and (ii) $d-|\Sigma(B)_{\infty}| \ge2$; we can either
first choose $B/F$ that satisfies (ii) and then choose a suitable $\tau$, or
choose $\tau\in\Aut(\mathbb{C})$ acting non-trivially on the real places of
$F$ and then choose a suitable $B/F$.

Then $H$ and the group $H^{\prime}$ attached to $(H, X)$ and $\tau$ satisfy
the conditions in Corollary \ref{suprigcor}.

\subsubsection{General $F/\mathbb{Q}$}

If $F$ does admit an automorphism other than the identity, we need to ``mark''
some finite places. Choose a rational prime $p$ over which $F$ splits
completely, and then $B/F$ and $\tau\in\Aut(\mathbb{C})$ such that in addition
to the two conditions (i) and (ii) as above, we have (iii) there is exactly
one (out of $d$) $p$-adic place $v_{p}$ in $\Sigma(B)$.

Then, again, $H$ and $H^{\prime}$ for this choice of $B$ and $\tau$ satisfy
the conditions in Corollary \ref{suprigcor}.

\begin{remark}
\label{galoisevasion} To see why one does need some extra argument marking
finite places, let $F$ be Galois over $\mathbb{Q}$, and suppose that for all
the rational prime numbers $p$, any two $p$-adic places $v$ and $w$ are
simultaneously contained in $\Sigma(B)$ (i.e., either both are in or both are
out); for example, suppose that $B$ is unramified at every finite place. Then,
even if $B$ satisfies the two conditions (i) and (ii), for \textit{any} choice
of $\tau\in\Aut(\mathbb{C})$, $B$ and $B^{\prime}$ will be isomorphic as
$\mathbb{Q}$-algebras (though not as $F$-algebras). Consequently, $H_{\ast}$
and $H^{\prime}_{\ast}$ will be isomorphic as $\mathbb{Q}$-groups, and as such
will admit isomorphic lattices.
\end{remark}

The dimension of the resulting quaternionic Shimura varieties is equal to
$d-|\Sigma(B)_{\infty}|$, which can be any integer $\geq2$. As $\Sigma
(B)_{\infty}$ contains at least one, but misses at least two, of infinite
places, it follows that all these varieties (a) are necessarily compact and
(b) cannot have rational weights, and a fortiori cannot be a classifying space
of rational Hodge structures \cite[5.24 and p. 332]{isv}.

\subsection{Unitary PEL Shimura varieties}

Now we turn to certain unitary Shimura varieties, which are moduli spaces of
polarized abelian varieties with endomorphism and level structures. We will
keep the following notation throughout: $F$ is a totally real number field of
degree $d\ge2$, $E/F$ is a totally imaginary quadratic extension, and $n\ge2$
is an integer.

We consider hermitian matrices $A \in M_{n}(E)$, that is, those with $A^{\ast}
= A$. Recall that over the real numbers, there are $\lfloor n/2 \rfloor+1$
distinct isomorphism classes of special unitary groups $SU(p, q)$ with $p\ge
q\ge0$ and $p+q = n$, while over any $p$-adic local field, there are one or
two isomorphism classes of special unitary groups, according as $n$ is odd or
even. Moreover, all of these can be defined by \textit{diagonal} hermitian
matrices. By the approximation theorem applied to the diagonal entries (in
$F$) of $A$, for any finite subset $\Sigma$ of places of $F$ and any
prescribed type of special unitary group $H(v)$ for each $v\in\Sigma$ (there
is only one choice if $v$ is finite and $n$ is odd), there is a diagonal
matrix $A \in M_{n}(F)$ such that the localization at each $v\in\Sigma$ of the
special unitary group $SU(A)$ over $F$ attached to $A$ is isomorphic to $H(v)$.

\subsubsection{Automorphism-free $F$}

Suppose $\Aut(F) = 1$ (hence $d=[F:\mathbb{Q}]\ge3$). For each infinite place
$v$ of $F$, choose a pair $(p_{v}, q_{v})$ with $p_{v}\ge q_{v} \ge0$ and
$p_{v} + q_{v} = n$, subject to the conditions that (i) not all of the pairs
should be the same and (ii) $\sum_{v | \infty} q_{v} \ge2$. As
$\Aut(\mathbb{C})$ acts transitively on the infinite places, we can choose
$\tau\in\Aut(\mathbb{C})$ such that $p_{v_{0}} \neq p_{\tau\circ v_{0}}$ for
some infinite place $v_{0}$. When $n\ge4$, we can even make $q_{v} > 0$ for
all $v|\infty$. Choose a diagonal matrix $A \in M_{n} (F) $ such that the
associated special unitary group $H:=SU(A)$ over $F$ has the chosen type
$SU(p_{v}, q_{v})$ at each infinite place $v$ of $F$.

Then there is a PEL Shimura datum $(G,X)$ such that $G^{\der}\cong H_{\ast}$,
and the groups $H$ and $H^{\prime}$ satisfy the conditions in Corollary
\ref{suprigcor}. The resulting varieties have dimension equal to
\[
\sum_{v|\infty}p_{v}q_{v}.
\]
They are compact if $q_{v}=0$ for at least one $v|\infty$. When $q_{v}>0$ for
all $v|\infty$, one can make noncompact examples by letting $A$ contain each
of $1$ and $-1$ as diagonal entries at least
\[
q_{0}:=\min_{v|\infty}q_{v}>0
\]
times; then the associated special unitary group contains the $q_{0}$-fold
product of $\SL_{2}$ over $F$, hence is isotropic, and $S$ will be noncompact
(cf. \cite[3.3(b)]{isv}). The smallest dimension of noncompact examples
obtained this way is $10=3\cdot1+3\cdot1+2\cdot2$.

\subsubsection{General $F/ \mathbb{Q}$, with $n$ even}

Now drop the condition on $\Aut(F)$, and assume that $p$ is a rational prime
number which (a) splits completely in $F$ and (b) every $p$-adic place $v$ of
$F$ remains inert in $E$ (given $F$, such an $E/F$ can be obtained by taking
the compositum of $F$ with a suitable imaginary quadratic field). In case $n$
is even, we can mark $p$-adic places, since there are two distinct types of
special unitary groups for the extension $E_{v} / \mathbb{Q}_{p}$.

More precisely, choose the diagonal matrix $A\in M_{n}(F)$ which satisfies, in
addition to the conditions (i) and (ii) as above, (iii) there is one $p$-adic
place $v_{0}$ of $F$ such that $SU(A)_{v_{0}}$ is not isomorphic to
$SU(A)_{v}$ for every other $p$-adic place $v\neq v_{0}$. Put $H:= SU(A)$ and
choose $\tau\in\Aut(\mathbb{C})$ that doesn't preserve the signatures at
infinity. Then the resulting groups $H$ and $H^{\prime}$ satisfy the conditions.

The sufficient condition for compactness and the formula for dimension as
above still hold in this case. The smallest dimension of noncompact examples
in this case is $7 = 3\cdot1 + 2\cdot2$ over real quadratic fields.

\begin{remark}
Using division algebras (instead of the matrix algebra $M_{n}(E)$) with
involution of the second kind extending the complex conjugation on $E/F$, one
can make inner forms of unitary groups $H$ and choose $\tau\in\Aut(\mathbb{C}%
)$ so that the resulting groups $H$ and $H^{\prime}$ satisfy the conditions in
Corollary \ref{suprigcor}, regardless of the parity of $n$ (see \cite[\S 2]%
{cl} for details). In this case, $H$ and $H^{\prime}$ will be anisotropic and
the corresponding Shimura varieties will be projective. They still have
interpretation as PEL moduli varieties.
\end{remark}

\subsection{Noncompact examples of type $D$}

Consider the case of type $D_{n}$, with $n\geq4$. There are two series (called
$BD\,\,I(q=2)$ and $D\,\,III$ \cite[p. 518]{he}) of hermitian symmetric
domains, coinciding exactly when $n=4$. Thus, there are two choices of
noncompact forms when $n\geq5$, and one can ask if this extra choice allows
noncompact examples. In this section, we construct such examples; these will
\textit{not} be of abelian type (see \cite[p. 331]{isv}).

Let $F$ be a number field and let $B$ be a quaternion division algebra over
$F$, with the main involution $\ast= \ast_{B}$. Then the matrix algebra $M:=
M_{n}(B)$ also has an involution $\ast_{M}$, mapping an element $\left(
a_{ij}\right)  _{1\le i, j\le n}$ to $\left(  a^{\ast}_{ji}\right)  _{1\le i,
j\le n}$. An invertible skew-hermitian matrix $A$ defines another involution
$\ast_{A}$ on $M$ by conjugation:
\begin{align*}
g^{\ast_{A}} := A \cdot g^{\ast_{M}} \cdot A^{-1},
\end{align*}
and one can form the special unitary group over $F$, given by
\[
SU(A)(R) := \left\{  g\in M_{n}(B) \otimes_{F} R : g^{\ast_{A}} \cdot g = 1
\text{ and } \mathrm{Nrd}^{M}_{F} (g) = 1 \right\}
\]
for an $F$-algebra $R$. Over an algebraic closure $\overline{F}$, $SU(A)$
becomes isomorphic to $SO_{2n}$.

If $v$ is a place of $F$ at which $B$ is unramified, there is an isomorphism
\begin{equation}
\xymatrix{ \varphi_v : B\otimes_F F_v \ar[r]^-{\sim} & M_2(F_v) }
\label{locsplt}%
\end{equation}
of $F_{v}$-algebras, under which $\ast_{B}$ corresponds to the involution
\begin{equation}
\dagger:x\mapsto J\cdot{\vphantom{x}}^{t}{x}\cdot J^{-1}, \label{daggerinv}%
\end{equation}
where $t$ stands for the transpose of a $2\times2$ matrix and $J\in
M_{2}(F_{v})$ is $t$-skew-symmetric (i.e., ${\vphantom{J}}^{t}{J}=-J$). As any
two non-degenerate alternating forms on $F_{v}^{\oplus2}$ are equivalent, by
composing $\varphi_{v}$ with an inner automorphism, we may assume that $J=%
\begin{pmatrix}
0 & 1\\
-1 & 0
\end{pmatrix}
$.

For an integer $n\ge1$, put $J(n):= \mathrm{diag} (J, \cdots, J) \in M_{n}
\left(  M_{2}(F_{v}) \right)  $. Then under the isomorphism (that we still
call $\varphi_{v}$) of $F_{v}$-algebras induced by $\varphi_{v}$:
\[
\xymatrix{
M\otimes_F F_v = M_n (B) \otimes_F F_v \cong M_n (B \otimes_F F_v) \ar[r]^-{\sim} & M_n \left( M_2(F_v) \right),
}
\]
the involution $\ast_{M} \otimes1$ corresponds to the involution on the
target
\[
\dagger: \left(  a_{ij} \right)  _{1\le i, j \le n} \mapsto\left(  J
\cdot{\vphantom{a_{ji}}}^{t}{a_{ji}} \cdot J^{-1} \right)  _{1\le i, j\le n}.
\]
Let $I_{1,1}:=
\begin{pmatrix}
1 & 0\\
0 & -1
\end{pmatrix}
\in M_{2}(F_{v})$. The matrix $J(n)$ (resp. the element $I_{1,1}$) is
$\dagger$-skew-symmetric, and so defines an involution and the special
unitary group over $F_{v}$ as above, such that
\begin{align*}
SU(J(n)) (F_{v})  &  \cong\left\{  g\in M_{2n} (F_{v}) : {\vphantom{g}}^{t}{g}
\cdot g = 1 \text{ and } \det(g) = 1 \right\}  ,\\
SU(I_{1,1}) (F_{v})  &  \cong\left\{  g\in M_{2} (F_{v}) : {\vphantom{g}}%
^{t}{g} \cdot%
\begin{pmatrix}
0 & 1\\
1 & 0
\end{pmatrix}
\cdot g =
\begin{pmatrix}
0 & 1\\
1 & 0
\end{pmatrix}
\text{ and } \det(g) = 1 \right\}  .
\end{align*}

Now we assume that $n\ge5$, that $F$ is a totally real number field of degree
$d\ge3$ with $\Aut(F) = 1$, and that the sets $S(B)_{\mathbb{H}}$ and
$S(B)_{\mathbb{R}}$ of definite and indefinite infinite places of $B/F$,
respectively, are both non-empty, so that there is a $\tau\in\Aut(\mathbb{C})$
not leaving $S(B)_{\mathbb{H}}$ stable. (See below for removing the assumption
$\Aut(F) = 1$.)

For each $v\in S(B)_{\mathbb{R}}$, fix isomorphisms $\varphi_{v}$ as in
(\ref{locsplt}) satisfying the condition (\ref{daggerinv}) with the standard
$J$ as above. By using the approximation theorem, we can choose a $\ast
$-skew-hermitian element $a_{1}\in B$ whose image under $\varphi_{v}$ is very
close to $I_{1,1} \in M_{2}(F_{v}) \cong M_{2}(\mathbb{R})$ for each $v\in
S(B)_{\mathbb{R}}$. Put $a_{2} := -a_{1}$. Then choose $\ast$-skew-hermitian
elements $a_{3}, \cdots, a_{n} \in B$ such that $\varphi_{v}(a_{i})$ is very
close to $J$ for each $v\in S(B)_{\mathbb{R}}$ and every $i=3, \cdots, n$.
Then the matrix $A=\mathrm{diag} (a_{1}, \cdots, a_{n}) \in M_{n}(B)$ is
skew-hermitian and invertible.

Let $H$ be the universal covering of $SU(A)$. The matrix $A$ was crafted so
that $H_{v}$ is of type $BD\,\, I(q=2)$ (resp. of type $D\,\, III$) for $v\in
S(B)_{\mathbb{R}}$ (resp. $v\in S(B)_{\mathbb{H}}$). As $\tau$ does not leave
$S(B)_{\mathbb{H}}$ stable and as we assumed $\Aut(F) = 1$, the group
$H^{\prime}$ attached to $H$ and $\tau$ satisfy the conditions in Corollary
\ref{suprigcor} (the real rank of the form $BD\,\, I(q=2)$ (resp. $D\,\, III$)
is $2$ (resp. $\lfloor n/2 \rfloor\ge2$), and the rank condition (ii) in the
construction is automatically met).

The group $H$ is isotropic over $F$---hence the resulting Shimura varieties
are noncompact---because $SU(A)$ contains $SU(a_{1}, -a_{1})$, which in turn
contains $SO(1, -1)$ over $F$ (since $a_{1}$ commutes with $F$), which is
isomorphic to the split torus over $F$. The Shimura varieties attached to $H$
have dimension
\[
|S(B)_{\mathbb{R}}| \cdot(2n-2) + |S(B)_{\mathbb{H}}| \cdot n(n-1)/2.
\]

Again, when $F$ (of degree $\geq2$ over $\mathbb{Q}$) has non-trivial
automorphisms, we can mark $p$-adic places, where $p$ is a rational prime such
that $F/\mathbb{Q}$ splits completely above $p$ and $B/F$ is unramified at
every $p$-adic place of $F$: Choose $a_{1}$ and $a_{2}$ just as above, but
this time include the $p$-adic places into consideration when applying the
approximation theorem to $a_{3},\cdots,a_{n}$, so as to distinguish the
special orthogonal (and hence the spin) groups at $p$-adic places; then use
Proposition \ref{nontrivaut}.

\subsection{Varieties defined over number fields}

As noted in the introduction, all of the varieties we consider have canonical
models defined over number fields. Briefly, given $(H,X)$, we need to choose a
(nonconnected) Shimura datum $(G,Y)$ such that $G$ is a reductive group with
derived group $H_{\ast}$ and $Y$ is disjoint union of hermitian symmetric domains,
one component of which is $X$. Then the Shimura variety defined by $(G,Y)$ has
a canonical model over its reflex field $E(G,Y)$, which is a number field, and
each of the varieties $\Gamma\backslash X$ has a canonical model over an
abelian extension of $E(G,Y)$ which can be described by class field theory.
See \cite{isv}. We explain this in the case of quaternionic Shimura varieties.

Thus, let $B$ be a quaternion algebra over a totally real field $F$. Then $B$
defines a partition of the real primes of $F$%
\begin{equation}
\Hom(F,\mathbb{R})=\Sigma_{\infty}\sqcup\Sigma_{\infty}^{\prime} \label{eq13}%
\end{equation}
with $\Sigma_{\infty}$ the set of primes where $B$ ramifies. Let $G$ be the
reductive group over $\mathbb{Q}$ with $G(\mathbb{Q})=B^{\times}$, and let $Y$
be a product of copies of $\mathbb{C}\smallsetminus\mathbb{R}$ indexed by
$\Sigma_{\infty}^{\prime}$ (assumed to be nonempty). Then $(G,Y)$ is a Shimura datum. The Galois group of
$\mathbb{Q}^{\mathrm{al}}$ acts on $\Hom(F,\mathbb{Q}^{\mathrm{al}}%
)\simeq\Hom(F,\mathbb{R})$, and its reflex field $E=$ $E(G,Y)$ is the fixed
field of the set of automorphisms of $\mathbb{Q}^{\mathrm{al}}$ stabilizing
the sets $\Sigma_{\infty}$ and $\Sigma_{\infty}^{\prime}$. See \cite[12.4(d)]{isv}

The norm map defines a homomorphism $\mathrm{Nm}\colon G\rightarrow\left(
\mathbb{G}_{m}\right)  _{F/\mathbb{Q}}$. The theory of Shimura varieties
shows that the Shimura variety attached to $(G,Y)$ has a canonical model over
$E(G,Y)$. Moreover, its set of connected components is $\left(\mathbb{A}_{F}%
^{\infty}\right)^{\times}/\overline{F^{\times}}$, and the theory provides a 
specific (reciprocity) homomorphism $r\colon\left(\mathbb{A}_{E}^{\infty}%
\right)^{\times}/E^{\times}\rightarrow\left(\mathbb{A}_{F}^{\infty}\right)%
^{\times}/\overline{F^{\times}}$. Let $K$ be a compact open subgroup of 
$G(\mathbb{A}^{\infty})$ sufficiently small that the congruence subgroup 
$\Gamma=K\cap H_{\ast}(\mathbb{Q})$ of $H_{\ast}(\mathbb{Q})$ is torsion free. 
Use the norm to map $K$ to a subgroup of $\left(\mathbb{A}_{F}^{\infty}\right)%
^{\times}/\overline{F^{\times}}$, and let $K^{\prime}$ be the inverse image 
of this subgroup in $\left(\mathbb{A}_{E}^{\infty}\right)^{\times}/E^{\times}$. 
The complex algebraic variety $\Gamma\backslash X$ has a canonical model $V$ 
over the abelian extension $E^{\prime}$ of $E$ corresponding by class field 
theory to $K^{\prime}$.

Now assume that $\left\vert \Sigma_{\infty}^{\prime}\right\vert \geq2$, and let
$\sigma_{1}$ and $\sigma_{2}$ be homomorphisms $E^{\prime}\rightarrow
\mathbb{C}$. If $F$ has no nontrivial automorphisms, then $\sigma_{1}V$ and
$\sigma_{2}V$ have nonisomorphic fundamental group whenever $\sigma_{1}%
|E\neq\sigma_{2}|E$ (because then there exists a real prime $v$ of $F$ such
that $\sigma_{1}\circ v$ and $\sigma_{2}\circ v$ lie in different subsets of
the partition (\ref{eq13})). If $F$ has nontrivial automorphisms, then, as
before, we need to impose a condition on a prime $p$ in order to be able to 
draw the same conclusion.

\begin{remark}
Finally we remark that we have proved that two arithmetic groups $\Gamma$ and
$\Gamma^{\prime}$ are not isomorphic by showing that they live in different
algebraic groups so that we can apply Margulis super-rigidity. However, of
course, lattices $\Gamma$ and $\Gamma^{\prime}$ may be nonisomorphic even when
they lie in the same algebraic group. Since the theory of Shimura varieties
gives an explicit description of $\Gamma^{\prime}$ in terms of $\Gamma$ and
$\tau$ (see \ref{r1}), this suggests that it may be possible to enlarge our
class of examples even further.
\end{remark}

\medskip

\noindent Department of Mathematics, University of Michigan, Ann Arbor, MI 48109-1043, USA.\\
E-mail: \url{jmilne@umich.edu}

\medskip\noindent Department of Mathematics, MIT, Cambridge, MA 02139-4307, USA.\\
E-mail: \url{junecuesuh@math.mit.edu}

\end{document}